\newtheorem{theorem}{Theorem}
\newtheorem{lemma}[theorem]{Lemma}
\newtheorem{proposition}[theorem]{Proposition}
\theoremstyle{definition}
\theoremstyle{definition}
\newtheorem{definition}[theorem]{Definition}
\newtheorem{example}[theorem]{Example}
\newtheorem{remark}[theorem]{Remark}
\newtheorem{algorithm}[theorem]{Algorithm}
\begin{document}

\title{Almost symmetric  numerical semigroups with given Frobenius number and type}

\author{M.B. Branco}
\address{Universidade de \'Evora\newline 
\indent Departamento de Matem\'atica\newline 
\indent 7000 \'Evora, Portugal}
\email{mbb@uevora.pt}

\author{I. Ojeda}
\address{Universidad de Extremadura\newline 
\indent Departamento de Matem\'aticas\newline 
\indent E-06071 Badajoz, Spain}
\email{ojedamc@unex.es}

\author{J.C. Rosales}
\address{Universidad de Granada\newline
\indent Departamento de \'Algebra\newline 
\indent E-18071 Granada, Spain}
\email{jrosales@ugr.es}

\thanks{The first author is supported by the project FCT PTDC/MAT/73544/2006). The second author was partially supported by the research groups FQM-024 (Junta de Extremadura/FEDER funds) and by the project MTM2015-65764-C3-1-P (MINECO/FEDER, UE). 
The third author was partially supported by the research groups FQM-343 FQM-5849 (Junta de Andalucia/Feder) and by the project  MTM2014-55367-P (MINECO/FEDER, UE. 2010 Mathematics Subject Classification: 20M14, 11D07.}

\date{\today}% to be added afterwards % revised date 
\subjclass[2010]{20M14, 20M25.}% AMS Subject Classification (2010)
\keywords{\em Almost symmetric numerical semigroup, irreducible numerical semigroup, genus, Frobenius number, type.}

\begin{abstract}
We give two algorithmic procedures to compute the whole set of almost symmetric numerical semigroups with fixed Frobenius number and type, and the whole set of almost symmetric numerical semigroups with fixed Frobenius number. Our algorithms allow to compute the whole set of almost symmetric numerical semigroups with fixed Frobenius number with similar or even higher efficiency that the known ones. They have been implemented in the GAP (\cite{GAP}) package \texttt{NumericalSgps} (\cite{numericalsgps}).
\end{abstract}

\maketitle
\pagestyle{myheadings}
\markboth{{M. B.} Branco \and I. Ojeda \and {J. C.} Rosales}{Almost symmetric numerical semigroups}

%%%%%%%%%%%%%%%%%%%%%%%%%%%%%%%%%%%%%%%%%%%%%%%%%%%%%%%%%%%
%%%%%%%%%%%%%%%%%%%%%%%%%%%%%%%%%%%%%%%%%%%%%%%%%%%%%%%%%%%
\section{Introduction}

Let $\mathbb{Z}$ and $\mathbb{N}$ denote the set of integers and nonnegative integers, respectively. A \textbf{numerical semigroup} is a subset $S$ of $\mathbb{N}$ that is closed under addition, contains the zero element and has finite complement in $\mathbb{N}$, that is, $\mathbb{N} \setminus S$ is a finite set.

Given a nonempty subset $A$ of $\mathbb{N}$, we write $\langle A \rangle$ for the submonoid of $\mathbb{N}$ generated by $A$, that is, \[\langle A \rangle := \Big\{ \sum_{finite} n a \mid n \in \mathbb{N}, a \in A\Big\}\]

It is well known that $\left\langle A\right\rangle$ is a numerical semigroup if and only if $\gcd\left(A\right)=1$ (see \cite[Lemma 2.1]{libro}) .  If $S$ is a numerical semigroup and $S=\left\langle A\right\rangle$ then  we say that $A$ is a \textbf{system of generators} of $S$.  Furthermore if $S\neq \left\langle A'\right\rangle$ for every $A' \varsubsetneq A$, then we say that $A$ is a \textbf{minimal system of generators} of $S$. It is also known that every numerical semigroup admits a unique finite minimal system of generators (\cite[Lemma 2.7]{libro}). Hence, if $S$ is a numerical semigroup, we write $\mathrm{msg}(S)$ for the  minimal system of generators of $S$.

Here and in the following $S$ will denote a numerical semigroup.

The \textbf{Frobenius number} of $S$ is the largest integer not belonging to $S$ and it is denoted by $\mathrm{F}(S)$. A numerical semigroup is irreducible if it cannot be expressed as the intersection of two numerical semigroups properly containing it. A numerical semigroup is said to be \textbf{symmetric} (\textbf{pseudo-symmetric}, resp.) if it is irreducible and its Frobenius number is odd (even, resp).

An integer $x\in \mathbb{Z}\setminus S$ is called a \textbf{pseudo-Frobenius number} if $x+S\setminus\{0\}\subseteq S$ (see \cite{JPAA}). The set of pseudo-Frobenius number of $S$ is denoted by $\mathrm{PF}(S)$. The type of the semigroup, denoted by $\mathrm{t}(S)$, is the cardinality of $\mathrm{PF}(S)$ and it was introduced in  \cite{froberg}.

As a consequence of \cite[Corollary 4.11 and 4.16 ]{libro}, we have the following result.

\begin{lemma}\label{2}
If $S$ is a numerical semigroup, then:
\begin{enumerate}[ (a)]
  \item $S$ is symmetric if and only if $\mathrm{PF}(S)=\big\{\mathrm{F}(S)\big\}$.
  \item  $S$ is pseudo symmetric if only if $\mathrm{PF}(S)=\big\{\mathrm{F}(S),\frac{\mathrm{F}(S)}{2}\big\}$.
\end{enumerate}
\end{lemma}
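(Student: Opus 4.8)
The plan is to deduce both equivalences from the reflection characterizations of symmetric and pseudo-symmetric numerical semigroups (a form of \cite[Corollary 4.11 and 4.16]{libro}): writing $f:=\mathrm{F}(S)$, one has that $S$ is symmetric if and only if, for every $z\in\mathbb{Z}$, $z\in S\Leftrightarrow f-z\notin S$; and $S$ is pseudo-symmetric if and only if $f$ is even and, for every $z\in\mathbb{Z}\setminus\{f/2\}$, $z\in S\Leftrightarrow f-z\notin S$. I shall also use two elementary facts valid for every numerical semigroup $S$: first, $f\in\mathrm{PF}(S)$, since $f+s>f$ forces $f+s\in S$ for all $s\in S\setminus\{0\}$; and second, the (always true) implication ``$z\in S\Rightarrow f-z\notin S$'', for otherwise $f=z+(f-z)\in S$.

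For the forward directions I argue by contradiction. Suppose $S$ is symmetric and let $x\in\mathrm{PF}(S)$ with $x\ne f$. Then $x\notin S$, so $f-x\in S$ by the reflection property, and $f-x\ne 0$; hence $f-x\in S\setminus\{0\}$, and $x\in\mathrm{PF}(S)$ gives $f=x+(f-x)\in S$, a contradiction. Thus $\mathrm{PF}(S)=\{f\}$, proving one implication of (a). Now suppose $S$ is pseudo-symmetric, so $f$ is even. I first note $f/2\in\mathrm{PF}(S)$: indeed $f/2\notin S$ (else $f\in S$), and if $f/2+s\notin S$ for some $s\in S\setminus\{0\}$, then $f/2+s\ne f/2$ and the reflection property yields $f/2-s=f-(f/2+s)\in S$, whence $f/2=(f/2-s)+s\in S$, a contradiction. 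The same computation as in the symmetric case then rules out any $x\in\mathrm{PF}(S)\setminus\{f,f/2\}$, so $\mathrm{PF}(S)=\{f,f/2\}$, proving one implication of (b).

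For the converse directions the main step is to manufacture a pseudo-Frobenius number out of an arbitrary gap. Given $z\in\mathbb{Z}\setminus S$, the set $\{z'\in\mathbb{Z}\setminus S : z'-z\in S\}$ is finite (being contained in $\mathbb{Z}\setminus S$) and nonempty (it contains $z$); pick $z'$ in it maximal with respect to the order $\preceq$ defined by $a\preceq b\Leftrightarrow b-a\in S$. For every $s\in S\setminus\{0\}$ we have $(z'+s)-z=(z'-z)+s\in S$, so maximality forces $z'+s\in S$; hence $z'\in\mathrm{PF}(S)$. If $\mathrm{PF}(S)=\{f\}$, this gives $f-z=z'-z\in S$ for every gap $z$, which together with the trivial implication above is exactly the reflection property, so $S$ is symmetric. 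If $\mathrm{PF}(S)=\{f,f/2\}$ (note $f$ must be even, since $f/2\in\mathbb{Z}\setminus S$), take a gap $z\ne f/2$; then $z'\in\{f,f/2\}$, and either $z'=f$, giving $f-z\in S$ at once, or $z'=f/2$, in which case $f/2-z=z'-z\in S\setminus\{0\}$ (nonzero because $z\ne f/2$) and, since $f/2\in\mathrm{PF}(S)$, $f-z=(f/2)+(f/2-z)\in S$. Either way $f-z\in S$, which is the reflection property for pseudo-symmetry, so $S$ is pseudo-symmetric.

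The only points requiring a little care are the maximality argument (to guarantee that $z'$ actually lands in $\mathrm{PF}(S)$) and, in the converse of (b), the case $z'=f/2$; the rest is a direct unwinding of the definitions. An alternative would be to bypass the reflection characterizations and work instead from the description of irreducible numerical semigroups as those maximal (under inclusion) with a given Frobenius number, showing that a ``forbidden'' pseudo-Frobenius number could be adjoined without changing $f$; but verifying that such an extension is again a numerical semigroup needs extra care, so I would prefer the route above.
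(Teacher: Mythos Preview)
The paper does not actually give a proof of this lemma: it is stated as a direct consequence of \cite[Corollary~4.11 and~4.16]{libro} and left at that. Your argument is correct and supplies precisely the details the paper omits. You take as input the reflection characterizations of (pseudo-)symmetry---which is a reasonable reading of what those corollaries say---and from there the forward directions are immediate, while for the converses your device of pushing an arbitrary gap $z$ up to a $\preceq$-maximal gap $z'\in\mathrm{PF}(S)$ with $z'-z\in S$ is the standard and efficient trick. The handling of the case $z'=f/2$ in part~(b), using that $f/2\in\mathrm{PF}(S)$ to get $f-z=(f/2)+(f/2-z)\in S$, is exactly the subtlety that needs to be checked, and you do it cleanly. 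One small remark: when you assert the set $\{z'\in\mathbb{Z}\setminus S: z'-z\in S\}$ is finite ``being contained in $\mathbb{Z}\setminus S$'', note that $\mathbb{Z}\setminus S$ itself is infinite (it contains all negative integers); finiteness really comes from $z'-z\in S\Rightarrow z'\ge z$, so the set lies in $(\mathbb{Z}\setminus S)\cap[z,\infty)$, which is finite. This does not affect the validity of the argument.
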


Observe that if $S$ is an irreducible numerical semigroups then $\mathrm{t}(S)\in\{1,2\}$ and we have that $\mathrm{t}(S)=1$ if and only if $\mathrm{F}(S)$ is odd, equivalently if $S$ is symmetric.

The elements in $\mathbb{N}\setminus S$ are called the \textbf{gaps} of $S$ and the cardinality of the set of gaps of $S$, $\mathrm{g}(S)$, is known as the \textbf{genus} of $S$. Following the terminology of \cite{jager} (see also \cite{barucci-froberg}) the elements in $$\mathrm{N}(S) := \{x \in \mathbb{N} \setminus S \mid \mathrm{F}(S)-x\in S\}$$ are called \textbf{gaps of the first type} and the remaining gaps are called \textbf{gaps of the second type}. Set of gaps of the second type is denoted $\mathrm{L}(S)$. By \cite[Proposition 3.4]{libro}, one has that if $S$ is symmetric (pseudo-symmetric, resp.) then $\mathrm{L}(S) = \varnothing$ ($\mathrm{L}(S) = \{\mathrm{F}(S)/2 \}$, resp.).

\begin{definition}
A numerical semigroup $S$ is \textbf{almost symmetric} (AS-symme\-tric, for short) when $\mathrm{L}(S) \subseteq \mathrm{PF}(S)$. 
\end{definition}

Notice that symmetric and pseudo-symmetric semigroups are almost symmetric. 

The concept of AS-semigroup was introduced in \cite {barucci-froberg} where the next is proven. 

\begin{proposition}\label{1}
Let $S$ be a numerical semigroup. The following conditions are equivalent:
\begin{enumerate}[ (a)]
  \item $S$ is AS-semigroup;
  \item $\mathrm{PF}(S)=\mathrm{L}(S)\cup\{\mathrm{F}(S)\}$;
  \item $\mathrm{g}(S)=\frac{\mathrm{F}(S)+\mathrm{t}(S)}{2}$.
\end{enumerate}
\end{proposition}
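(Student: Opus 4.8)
The plan is to isolate two statements that hold for \emph{every} numerical semigroup $S$ and then read off the three equivalences by bookkeeping.

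First I would establish the general inclusion $\mathrm{PF}(S)\subseteq \mathrm{L}(S)\cup\{\mathrm{F}(S)\}$, together with the facts that $\mathrm{F}(S)\in\mathrm{PF}(S)$ and $\mathrm{F}(S)\notin\mathrm{L}(S)$. The latter two are immediate: $\mathrm{F}(S)\in\mathrm{PF}(S)$ by definition of the Frobenius number, and $\mathrm{F}(S)-\mathrm{F}(S)=0\in S$ gives $\mathrm{F}(S)\in\mathrm{N}(S)$. For the inclusion, take $x\in\mathrm{PF}(S)$ with $x\neq\mathrm{F}(S)$; recalling that $\mathrm{PF}(S)\subseteq\{1,\dots,\mathrm{F}(S)\}$, $x$ is a gap and $\mathrm{F}(S)-x>0$. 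If we had $\mathrm{F}(S)-x\in S$, then $\mathrm{F}(S)-x\in S\setminus\{0\}$ and the defining property of pseudo-Frobenius numbers would force $\mathrm{F}(S)=x+(\mathrm{F}(S)-x)\in S$, a contradiction; hence $\mathrm{F}(S)-x\notin S$, i.e.\ $x\in\mathrm{L}(S)$. In particular $\mathrm{PF}(S)=\{\mathrm{F}(S)\}\sqcup\big(\mathrm{PF}(S)\cap\mathrm{L}(S)\big)$, so $\mathrm{t}(S)\le|\mathrm{L}(S)|+1$, with equality exactly when $\mathrm{L}(S)\subseteq\mathrm{PF}(S)$.

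Next I would prove the identity $2\,\mathrm{g}(S)=\mathrm{F}(S)+1+|\mathrm{L}(S)|$ by a symmetry argument on the interval $\{0,1,\dots,\mathrm{F}(S)\}$ under the involution $i\mapsto\mathrm{F}(S)-i$. No two-element orbit $\{i,\mathrm{F}(S)-i\}$ can consist of two elements of $S$ (that would force $\mathrm{F}(S)\in S$); if it consists of two gaps, both of them lie in $\mathrm{L}(S)$; and if it contains exactly one gap, that gap lies in $\mathrm{N}(S)$. When $\mathrm{F}(S)$ is even, the fixed point $\mathrm{F}(S)/2$ is a gap and lies in $\mathrm{L}(S)$. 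Letting $a$ be the number of two-element orbits made of two gaps and $b$ the number of two-element orbits with exactly one gap, one has $a+b=\lfloor(\mathrm{F}(S)+1)/2\rfloor$, while $\mathrm{g}(S)=2a+b$ and $|\mathrm{L}(S)|=2a$ when $\mathrm{F}(S)$ is odd, and $\mathrm{g}(S)=2a+b+1$ and $|\mathrm{L}(S)|=2a+1$ when $\mathrm{F}(S)$ is even. Eliminating $a$ and $b$ gives the identity in either case. This small case split is the only delicate point, and the identity itself is likely already recorded in the references on gaps of the two types.

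Finally the equivalences follow formally. (a)$\Rightarrow$(b): if $\mathrm{L}(S)\subseteq\mathrm{PF}(S)$ then, since also $\mathrm{F}(S)\in\mathrm{PF}(S)$, we get $\mathrm{L}(S)\cup\{\mathrm{F}(S)\}\subseteq\mathrm{PF}(S)$, and the reverse inclusion is the general one from the first step. (b)$\Rightarrow$(c): from (b) and $\mathrm{F}(S)\notin\mathrm{L}(S)$ we get $\mathrm{t}(S)=|\mathrm{L}(S)|+1$, so substituting into the identity of the second step yields $2\,\mathrm{g}(S)=\mathrm{F}(S)+\mathrm{t}(S)$. (c)$\Rightarrow$(a): (c) together with that identity forces $\mathrm{t}(S)=|\mathrm{L}(S)|+1$, which by the first step happens precisely when $\mathrm{L}(S)\subseteq\mathrm{PF}(S)$, i.e.\ when $S$ is an AS-semigroup.
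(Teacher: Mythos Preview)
Your argument is correct: the inclusion $\mathrm{PF}(S)\subseteq\mathrm{L}(S)\cup\{\mathrm{F}(S)\}$ and the identity $2\,\mathrm{g}(S)=\mathrm{F}(S)+1+|\mathrm{L}(S)|$ are exactly the two ingredients needed, and your derivations of both (via the pseudo-Frobenius property for the first, and the involution $i\mapsto\mathrm{F}(S)-i$ for the second) are clean and valid. The equivalences then fall out just as you wrote.

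There is nothing to compare against in the paper itself: the proposition is quoted from Barucci--Fr\"oberg (\emph{One-dimensional almost Gorenstein rings}) without proof, so the paper relies on the cited reference rather than supplying an argument. Your write-up thus furnishes a self-contained proof where the paper only records the statement.
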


It is well known (see \cite[Proposition 2.2]{nari}) that if $S$ is a numerical semigroup then \begin{equation}\label{ecu3} \mathrm{g}(S) \geq \frac{\mathrm{F}(S)+\mathrm{t}(S)}{2}.\end{equation} Therefore, we deduce that AS-semigroups are those numerical semigroups with the least possible genus in the set of numerical semigroups with fixed Frobenius number and type (observe that AS-semigroups are maximal in this set).

The main aim of this paper is to give algorithmic methods to compute the whole set of AS-semigroups with fixed Frobenius number and given type. Thus, by Proposition \ref{1}, we will solve the optimization of problem of the computation of all numerical semigroups with fixed Frobenius number and type, and minimal genus. 

An interesting application our computations is related to the theory of $S-$graded minimal free resolutions (see Remark \ref{Remark final}).

For our main purpose, we will give two algorithms. The first one will start from the almost symmetric numerical semigroups with Frobenius number $F$ and smallest possible type (that is, type equals to $1$) and the second will start from the almost symmetric numerical semigroup with Frobenius number $F$ and biggest possible type  (that is, type equals to $F$). For that reasons, we have called them the ascending and the descending algorithms, respectively.

As an inmediate consequence, our algorithms  allow to compute set of AS-semigroups with fixed Frobenius number. The asceding algorithm has a running time similar to the algorithm given in \cite{CA}. Its main virtue is that it enables to perform the computation with separate processes for each feasible type or even truncate the computation in a range of types. The descending algorithm wins without forgiveness for the previous ones for high Frobenius numbers . A discussion table of timing is included in Section \ref{sect time}.

The referee pointed out an alternative algorithm to compute the set of AS-semigroups with fixed Frobenius number and given type that is based in the algorithm to compute the set of numerical semigroups with given pseudo-Frobenius numbers introduced in \cite{DGSRP}. We briefly discuss this alternative algorithm in Section \ref{sect time}.

We end this paper with a computational evidence which establishes an apparently relation between the number of AS-semigroups with fixed Frobenius number and given type and the Bras-Amoros' conjecture on the number of numerical semigroups of a given genus (see Remark \ref{rem_fin}).

%%%%%%%%%%%%%%%%%%%%%%%%%%%%%%%%%%%%%%%%%%%%%%%%%%%%%%%%%%%
%%%%%%%%%%%%%%%%%%%%%%%%%%%%%%%%%%%%%%%%%%%%%%%%%%%%%%%%%%%
\section{Irreducible numerical semigroups and AS-semigroups}\label{S2}

Throughout this section, $F$ and $t$ are integers and $S$ denotes a numerical semigroup. Our goal now is to describe the conditions that $F$ and $t$ must fulfill for $S$ to be an AS-semigroup with  Frobenius number $F$ and type $t$. First we need to recall some useful results.

\begin{proposition}\cite[Theorem 3]{CA}.\label{3}
$S$ is AS-semigroup if and only if there exists an irreducible numerical $S'$ and $A \subseteq \mathrm{msg}(S')$ such that
\begin{enumerate}[ (a)]
\item $S = S' \setminus A$.
\item $\mathrm{F}(S')/2 < x < F(S')$, for every $x \in A$.
\item $x+y-\mathrm{F}(S') \not\in S$, for every $x,y \in A$.  
\end{enumerate}
In this case, $\mathrm{F}(S) = \mathrm{F}(S')$ and $\mathrm{t}(S)=2 \#A + \mathrm{t}(S')$, where $\#$ means cardinality.
\end{proposition}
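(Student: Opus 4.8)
The plan is to prove the two implications separately; I write $F$ for the common Frobenius number in each case. For the ``if'' part, assume $S'$ is irreducible, $A \subseteq \mathrm{msg}(S')$ satisfies (b) and (c), and put $S = S'\setminus A$. First I would check that $S$ is a numerical semigroup: it contains $0$ and has finite complement, and it is closed under addition because a sum of two nonzero elements of $S'$ is never a minimal generator of $S'$ and hence never lies in $A$; then (b) gives $\mathrm{F}(S) = \mathrm{F}(S') = F$ at once. The crux is to establish
\[ \mathrm{PF}(S) \;=\; A \;\sqcup\; (F-A) \;\sqcup\; \mathrm{PF}(S'), \qquad F-A := \{\, F-a \mid a\in A\,\}, \]
the three pieces being disjoint since $A\subseteq(F/2,F)$, $F-A\subseteq(0,F/2)$ and $\mathrm{PF}(S')\subseteq\{F/2,F\}$ by Lemma~\ref{2}. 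Granting this, $\mathrm{t}(S)=2\#A+\mathrm{t}(S')$, and as $\mathrm{g}(S)=\mathrm{g}(S')+\#A$ while $\mathrm{g}(S')=(F+\mathrm{t}(S'))/2$ (an irreducible semigroup being almost symmetric), Proposition~\ref{1}(c) yields that $S$ is an AS-semigroup.

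To prove the displayed equality I would argue both inclusions. For ``$\supseteq$'': each $a\in A$ lies in $\mathrm{PF}(S)$ by the ``a sum is not a minimal generator'' remark; $F\in\mathrm{PF}(S)$ always; and if $S'$ is pseudo-symmetric then $F/2\in\mathrm{PF}(S)$, which needs $F/2+s\notin A$ for $s\in S\setminus\{0\}$ --- true because $F/2+s=b\in A$ would force $2b-F=2s\in S$, contradicting (c) with $x=y=b$. For $a\in A$ one checks $(F-a)+s\in S$ for all $s\in S\setminus\{0\}$: it is not in $A$ (else (c) fails for $a$ and that element), and it lies in $S'$ --- the only delicate case, $(F-a)+s<F$, forces $a-s\notin S'$ (again ``a sum is not a minimal generator''), whence $F-(a-s)\in S'$ by irreducibility of $S'$, after discarding $a-s=F/2$ via (c) with $x=y=a$. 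For ``$\subseteq$'': given $x\in\mathrm{PF}(S)\setminus(A\cup\mathrm{PF}(S'))$, $x$ is a gap of $S'$ with $x<F$ and $x\ne F/2$, so some $s'\in S'\setminus\{0\}$ has $x+s'\notin S'$; necessarily $s'\in A$ (else $x\in\mathrm{PF}(S)$ forces $x+s'\in S\subseteq S'$); if $x+s'=F$ then $x=F-s'\in F-A$ and we are done, while $x+s'<F$ is impossible since irreducibility of $S'$ produces $c:=F-x-s'\in S'\setminus S=A$ with $s'+c=F-x<F$, contradicting $A\subseteq(F/2,F)$.

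For the ``only if'' part, given an AS-semigroup $S$ I would set $A=\{\,f\in\mathrm{PF}(S)\mid F/2<f<F\,\}$ and $S'=S\cup A$. Proposition~\ref{1} shows $\mathrm{PF}(S)\setminus\{F\}=\mathrm{L}(S)$ is stable under $x\mapsto F-x$. Then, in order: $S'$ is a numerical semigroup (sums within $A$ exceed $F$, and $f+s\in S$ for $f\in A$ and $s\in S\setminus\{0\}$); $\mathrm{F}(S')=F$; $A\subseteq\mathrm{msg}(S')$ (no element of $A$ is a sum of two nonzero elements of $S'$, by a short case check); (a) holds because $S\cap A=\varnothing$; (b) is the definition of $A$; and (c) holds since $f+g-F=s\in S$ with $f,g\in A$ would give $F-g\in\mathrm{PF}(S)$ and $(F-g)+s=f\in S$, impossible. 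It remains to see that $S'$ is irreducible: the stability of $\mathrm{L}(S)$ gives $\mathrm{t}(S)=1+2\#A+\epsilon$ where $\epsilon=1$ precisely when $F/2\in\mathrm{PF}(S)$, and since $\mathbb{N}\setminus S'=(\mathbb{N}\setminus S)\setminus A$ and $S$ is almost symmetric, $\mathrm{g}(S')=\mathrm{g}(S)-\#A=(F+1+\epsilon)/2$; then \eqref{ecu3} forces $\mathrm{t}(S')\le 1+\epsilon$, and combining this with Lemma~\ref{2}, with the stability of $\mathrm{L}(S')$, and with the fact that symmetric semigroups have odd Frobenius number identifies $S'$ as symmetric when $\epsilon=0$ and pseudo-symmetric when $\epsilon=1$. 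The equalities $\mathrm{F}(S)=\mathrm{F}(S')$ and $\mathrm{t}(S)=2\#A+\mathrm{t}(S')$ fall out of these computations.

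I expect the main obstacle to be the computation of $\mathrm{PF}(S)$ in the ``if'' direction: showing both that the reflected numbers $F-a$ are pseudo-Frobenius numbers of $S$ and that no new pseudo-Frobenius numbers appear is exactly where hypothesis (c) is indispensable, and it must be invoked in two somewhat different ways --- to forbid $(F-a)+s\in A$ and to forbid $a-s=F/2$.
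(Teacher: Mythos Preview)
The paper does not contain a proof of this proposition: it is quoted verbatim as \cite[Theorem~3]{CA} and used as a black box, so there is no argument in the paper to compare your proposal against.

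That said, your plan is sound and would go through. The explicit computation $\mathrm{PF}(S)=A\sqcup(F-A)\sqcup\mathrm{PF}(S')$ in the ``if'' direction is the right organizing principle, and your use of (c) to rule out $(F-a)+s\in A$ and $a-s=F/2$ is exactly what is needed. One step is a bit compressed: in the ``$\subseteq$'' part you assert that $c:=F-x-s'$ lies in $S'\setminus S=A$, but you only justified $c\in S'$ (via irreducibility). To see $c\notin S$, note that $c+x=F-s'$ is a gap of $S'$ (hence of $S$) because $s'\in S'$ and $F\notin S'$; so if $c$ were a nonzero element of $S$ then $x\in\mathrm{PF}(S)$ would force $c+x\in S$, a contradiction, and $c=0$ is excluded since $x+s'<F$. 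With this line added the inclusion is complete.

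In the ``only if'' direction your counting $\mathrm{t}(S)=1+2\#A+\epsilon$ and $\mathrm{g}(S')=(F+1+\epsilon)/2$ is correct; to finish cleanly you can observe that these give $\mathrm{g}(S')=(F+\mathrm{t}(S'))/2$, so $S'$ is almost symmetric by Proposition~\ref{1}, and then $\mathrm{PF}(S')=\mathrm{L}(S')\cup\{F\}$ with $\#\mathrm{L}(S')=\epsilon$ forces $\mathrm{PF}(S')\subseteq\{F,F/2\}$, which is Lemma~\ref{2}.
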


Notice that if $A \neq \varnothing$, then $S' \setminus A$ is neither symmetric nor pseudo-symmetric.

Let us denote by $C(F)$ the following numerical semigroup:
\begin{itemize}
\item $\{0, \frac{F+1}2, \frac{F+1}2+1, \frac{F+1}2+2, \ldots \} \setminus \{F\} $ if $F$ is odd; 
\item $\{0, \frac{F}2+1, \frac{F}2+2, \ldots \} \setminus \{F\} $ if $F$ is even. 
\end{itemize}
Clearly, $C(F)$ has Frobenius number equal to $F$. Moreover, by Lemma \ref{2}, $C(F)$ is irreducible.

\begin{proposition}\cite[Proposition 2.7]{forum}\label{5}
The semigroup $C(F)$ is the unique irreducible numerical semigroup
with Frobenius number $F$ and such that all its minimal generators are larger than $\frac{F}2$.
\end{proposition}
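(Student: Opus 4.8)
The statement splits into an existence part (the stated $C(F)$ does have the listed properties) and a uniqueness part. The plan is to dispatch existence immediately and then spend the work on uniqueness.

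For existence: it is already recorded above that $C(F)$ is irreducible and has Frobenius number $F$, so only the generator condition needs a remark. The least positive element of $C(F)$ is $\tfrac{F+1}{2}$ when $F$ is odd and $\tfrac{F}{2}+1$ when $F$ is even, and in both cases this is larger than $\tfrac{F}{2}$. Since every element of $\mathrm{msg}(C(F))$ is in particular a positive element of $C(F)$, all minimal generators of $C(F)$ exceed $\tfrac{F}{2}$.

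For uniqueness, let $S'$ be an irreducible numerical semigroup with $\mathrm{F}(S') = F$ all of whose minimal generators are larger than $\tfrac{F}{2}$. The key observation is that every positive element of $S'$ is a sum (with repetitions allowed) of elements of $\mathrm{msg}(S')$, hence is at least as large as one of those generators and therefore exceeds $\tfrac{F}{2}$; consequently $S'$ contains no integer in $\{1, \ldots, \lfloor F/2\rfloor\}$. Thus the integers $1, \ldots, \lfloor F/2\rfloor$, together with $F$, are gaps of $S'$, while $0 \in S'$ and every integer greater than $F$ lies in $S'$. It remains to decide which integers strictly between $F/2$ and $F$ belong to $S'$, and for this I split on the parity of $F$. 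If $F$ is odd, then $S'$ is symmetric by the observation following Lemma \ref{2}, so $\mathrm{L}(S') = \varnothing$ by \cite[Proposition 3.4]{libro}, i.e. $F - x \in S'$ for every gap $x$ of $S'$; applying this to $x = 1, \ldots, \tfrac{F-1}{2}$ forces $\tfrac{F+1}{2}, \ldots, F-1 \in S'$. Combining this with $0 \in S'$, $F \notin S'$, and $\{F+1, F+2, \ldots\} \subseteq S'$ accounts for every integer and yields $S' = C(F)$. If $F$ is even, then $S'$ is pseudo-symmetric, so $\mathrm{L}(S') = \{F/2\}$ and $F - x \in S'$ for every gap $x \ne F/2$; applying this to $x = 1, \ldots, \tfrac{F}{2}-1$ forces $\tfrac{F}{2}+1, \ldots, F-1 \in S'$, and the same bookkeeping gives $S' = C(F)$.

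I do not anticipate a real obstacle: the whole argument collapses once the ``no small elements'' observation is in place, after which it is just a short verification of which integers in $[0,F]$ lie in $S'$. The only points that deserve a moment's care are confirming that the located elements genuinely exhaust $S'$ and checking the degenerate small values of $F$ (for instance $F=1$ or $F=2$), where some of the index ranges displayed above are empty but the conclusion still follows directly from $0 \in S'$, $F \notin S'$, and $\{F+1, F+2, \ldots\} \subseteq S'$.
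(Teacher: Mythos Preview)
Your argument is correct. The existence part is immediate, and for uniqueness your key observation---that every nonzero element of $S'$ is a sum of minimal generators and therefore exceeds $F/2$---cleanly forces $\{1,\ldots,\lfloor F/2\rfloor\}$ to consist of gaps; the parity split via symmetry/pseudo-symmetry then pins down the remaining elements exactly as you describe.

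As for comparison with the paper: note that the paper does \emph{not} supply a proof of this proposition. It is quoted verbatim from \cite[Proposition 2.7]{forum} and used as a black box, so there is nothing in the present paper to compare your argument against. Your self-contained proof is therefore a genuine addition rather than a rederivation of something already written here.
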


In the following result we present a necessary and sufficient condition for the existence of an AS-semigroup with given Frobenius number and type.

\begin{proposition}\label{6}
There exists an almost symmetric semigroup $S$ with $F(S) = F$
and $t(S) = t$ if if and only if $\mathrm{F}(S)+\mathrm{t}(S)$ is an even number less than or equal to $2\,  \mathrm{F}(S)$.
\end{proposition}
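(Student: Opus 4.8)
The plan is to prove both implications separately, using the structural results recalled above, especially Propositions~\ref{1} and~\ref{3} and the semigroup $C(F)$.

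For the ``only if'' direction, suppose $S$ is an almost symmetric semigroup with $\mathrm{F}(S)=F$ and $\mathrm{t}(S)=t$. By Proposition~\ref{1}(c) we have $\mathrm{g}(S)=\frac{F+t}{2}$, and since $\mathrm{g}(S)$ is an integer, $F+t$ must be even. The inequality $F+t\le 2F$ is equivalent to $t\le F$; this follows because every pseudo-Frobenius number lies in $\mathbb{N}\setminus S$ and is therefore a gap, so $\mathrm{t}(S)=\#\mathrm{PF}(S)\le\mathrm{g}(S)$, and moreover every gap is at most $F$, giving $\mathrm{g}(S)\le F$; hence $t\le F$. (Alternatively one can cite that $\{1,2,\ldots\}$ contains all gaps and $F$ is the largest.)

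For the ``if'' direction, assume $F+t$ is even and $t\le F$. The idea is to construct an explicit AS-semigroup realizing these invariants, starting from $C(F)$ and deleting a suitable set $A$ of minimal generators, as licensed by Proposition~\ref{3}. Note that $C(F)$ is irreducible with $\mathrm{F}(C(F))=F$ and $\mathrm{t}(C(F))\in\{1,2\}$ depending on the parity of $F$; by Proposition~\ref{3}, removing a set $A$ of $k$ minimal generators (all lying strictly between $F/2$ and $F$, which by Proposition~\ref{5} is automatic for generators of $C(F)$) subject to condition (c) yields an AS-semigroup of type $2k+\mathrm{t}(C(F))$. Thus the achievable types are exactly $\mathrm{t}(C(F)), \mathrm{t}(C(F))+2, \mathrm{t}(C(F))+4, \ldots$, i.e. all integers of the correct parity up to the number of available generators. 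Since $\mathrm{t}(C(F))\equiv F \pmod 2$ (it is $1$ if $F$ odd, $2$ if $F$ even) and $F+t$ even forces $t\equiv F\equiv \mathrm{t}(C(F))\pmod 2$, the parity matches; it remains to check that we can delete enough generators, i.e. that $k=(t-\mathrm{t}(C(F)))/2$ generators satisfying condition (c) of Proposition~\ref{3} can be found, and that $t\le F$ guarantees this.

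The main obstacle is this last combinatorial verification: one must exhibit, for each admissible $t$, a set $A$ of the right cardinality inside $\mathrm{msg}(C(F))$ satisfying $x+y-F\notin C(F)\setminus A$ for all $x,y\in A$. The natural candidate is to take $A$ to consist of the largest available generators of $C(F)$, namely those in a top interval just below $F$; for such $x,y$ one has $x+y-F$ small (in fact $x+y-F < F/2$ when $x,y$ are close enough to $F$), hence a gap of $C(F)$, so condition (c) holds. Counting the generators of $C(F)$ in the relevant range and matching it against $k\le (F-\mathrm{t}(C(F)))/2$ (which is what $t\le F$ gives) is the crux; I expect this to reduce to an elementary estimate on the number of integers in an interval of the form $(\,\cdot, F)$, and a short case split on the parity of $F$.
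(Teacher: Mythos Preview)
Your overall strategy coincides with the paper's: the ``only if'' direction via Proposition~\ref{1}(c) and the bound $\mathrm{g}(S)\le F$, and the ``if'' direction by deleting a suitable set $A$ of minimal generators from $C(F)$ and invoking Proposition~\ref{3}. The parity and counting analysis is also correct.

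The genuine gap is in your choice of $A$. You propose to take the \emph{largest} generators of $C(F)$, those in a top interval just below $F$, and claim that for such $x,y$ the quantity $x+y-F$ is small, in fact $<F/2$. This is backwards: if $x,y$ are close to $F$, then $x+y$ is close to $2F$, so $x+y-F$ is close to $F$, not to $0$. Concretely, for $F=11$ and $t=3$ your recipe gives $A=\{10\}$; with $x=y=10$ one has $x+y-F=9$, which lies in $C(11)\setminus A=\{0,6,7,8,9,12,13,\ldots\}$, so condition~(c) of Proposition~\ref{3} fails and $C(11)\setminus\{10\}$ is not even a semigroup.

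The paper instead takes the \emph{smallest} generators: for $F$ odd, $A=\{\frac{F+1}{2},\ldots,\frac{F+1}{2}+\frac{t-3}{2}\}$; for $F$ even, $A=\{\frac{F}{2}+1,\ldots,\frac{F}{2}+\frac{t-2}{2}\}$. Then for $x,y\in A$ one has $1\le x+y-F\le t-2$, and since $t\le F$ this value either lies below the multiplicity of $C(F)$ (hence is a gap) or falls back into $A$ itself; either way $x+y-F\notin C(F)\setminus A$, and condition~(c) holds. Your intuition that $x+y-F$ should be forced small was right; it is achieved by choosing $x,y$ close to $F/2$, not close to $F$.
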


\begin{proof}
Set $F = \mathrm{F}(S)$ and $t = \mathrm{t}(S)$. If $S$ is an AS-semigroup, by Proposition \ref{1}, $F+t = 2 \mathrm{g}(S)$ is even. Besides, since  $g \leq F$, it follows that $F+t \leq 2 F$. Conversely, we distinguish to cases:   
\begin{itemize}
\item[] if $F$ is odd, we set $A=\left\{\frac{F+1}{2},\frac{F+1}{2}+1,\ldots,\frac{F+1}{2}+\frac{t-3}{2}\right\}$; 
\item[] if $F$ is even, we set $A=\left\{\frac{F}{2}+1,\ldots,\frac{F}{2}+\frac{t-2}{2}\right\}$.
\end{itemize}
Now, if $S' = C(F)$, then, by applying Proposition \ref{3}, we conclude  that $S' \setminus A$ is an AS-semigroup with Frobenius number $F$ and type $t$.
\end{proof}

Observe that, if we choose $ F $ and $ t $ as in Proposition \ref{6}, then we can use Proposition \ref{3} to produce an AS-semigroup with Frobenius number $ F $ and type $ t $, from $ C (S) $. Let us see a couple of examples.

\begin{example}\label{7a}
Set $F=11$ and $t = 5$. By Proposition \ref{6}, there exists at least one AS-semigroup with Frobenius $11$ and type $5$. In this case, the semigroup $C(11)$ is $\{0,6,7,8,9,10,12,13, \ldots \}$. So, by taking $A = \{6,7\}$, Proposition \ref{3} ensures that \[S = C(11) \setminus A = \{0,8,9,10,12,13, \ldots \}\] is an AS-semigroup with $\mathrm{F}(S)=11$ and $\mathrm{t}(S)=5$.
\end{example}

\begin{example}\label{7b}
Set $F=10$ and $t = 6$. By Proposition \ref{6}, there exists at least one AS-semigroup with Frobenius $10$ and type $6$. In this case, the semigroup $C(10)$ is  $\{0,6,7,8,9,11,12,13, \ldots \}$. So, by taking $A = \{6,7\}$, Proposition \ref{3} ensures that \[S = C(10) \setminus A = \{0,8,9,11,12,13, \ldots \}\] is an AS-semigroup with $\mathrm{F}(S)=10$ and $\mathrm{t}(S)=6$.
\end{example}

%%%%%%%%%%%%%%%%%%%%%%%%%%%%%%%%%%%%%%%%%%%%%%%%%%%%%%%%%%%
%%%%%%%%%%%%%%%%%%%%%%%%%%%%%%%%%%%%%%%%%%%%%%%%%%%%%%%%%%%
\section{The asceding algorithm}\label{S3}

Let $F$ and $t$ be positive integers such that $F+t$ is an even number lesser than or equal to $2\, F$. The objective in this section is to describe an algorithmic procedure to compute the set \[\mathscr A(F,t):=\left\{S \mid S \text{ is an AS-semigroup with } \mathrm{F}(S)= F \text{ and } \mathrm{t}(S)=t \right\}.\]
In order to reach the almost symmetric numerical semigroups with Frobenius number $F$ and type $t$, our algorithm will depart from the almost symmetric numerical semigroups with Frobenius number $F$ and type $1$, that is to say, from the irreducible numerical semigroups with Frobenius number $F$.

Before presenting the result that supports our algorithm, we need to recall one of the main results in  \cite{forum} which allows to compute the set $\mathscr I(F)$ of all irreducible numerical semigroups with Frobenius number $F$.

\begin{proposition}\cite[Proposition 2.9]{forum}\label{9}
If $S\in \mathscr I(F)$, then \[\big( S \setminus \{x\} \big) \cup \{F- x \} \in 
\mathscr I(F),\] for every $x \in\mathrm{msg}(S)$ such that 
$ \frac{F}{2} < x < F,\ 2 x- F \not\in S,\  3 x \neq 2\, F,\ 4 x\neq 3\, F$ and $F-x< \min\big(\mathrm{msg}(S)\big)$.
\end{proposition}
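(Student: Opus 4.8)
The plan is to verify that the modified set $T := (S \setminus \{x\}) \cup \{F-x\}$ is a numerical semigroup with Frobenius number $F$, and then to invoke the characterization of irreducibility via pseudo-Frobenius numbers (Lemma \ref{2} together with the symmetric/pseudo-symmetric dichotomy) to conclude $T \in \mathscr I(F)$. First I would check that $T$ is closed under addition. Since $F-x < \min(\mathrm{msg}(S))$, the element $F-x$ is smaller than every nonzero element of $S$, so the only sums that could fail to lie in $T$ are those of the form $(F-x) + s$ with $s \in T$; one must show $(F-x) + s \in T$ for all $s \in T \setminus \{0\}$, and in particular $2(F-x) \in T$. Because $x \in \mathrm{msg}(S)$ and $\tfrac{F}{2} < x < F$, one has $F - x < \tfrac{F}{2}$, hence $2(F-x) < F$, and the hypotheses $3x \neq 2F$ and $4x \neq 3F$ (i.e. $2(F-x) \neq x$ and, after doubling, the analogous exclusion) guarantee $2(F-x) \notin \{x\}$ and that it is not forced out; the condition $2x - F \notin S$ is what ensures $F - x$ behaves like a pseudo-Frobenius number of $S$ rather than an element, which is the mechanism making the swap legitimate. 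I would also confirm $\mathbb{N} \setminus T$ is finite (immediate, since we removed one element and added one) and that $F \notin T$ while $F + 1, F + 2, \ldots \in T$ (also immediate since $x < F$ and $F - x < F$), giving $\mathrm{F}(T) = F$.

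The second and main part is to show $T$ is irreducible, equivalently (by Lemma \ref{2}) that $\mathrm{PF}(T) = \{F\}$ if $F$ is odd, or $\mathrm{PF}(T) = \{F, F/2\}$ if $F$ is even. The key observation is that removing a minimal generator $x$ with $\tfrac F2 < x < F$ from $S$ and adjoining its ``complement'' $F - x$ is an involutive operation that, on the level of gaps, exchanges $x$ (a gap of the first type for $T$, since $F - x \in T$) with $F - x$; the set of gaps of the first type has even cardinality in the symmetric case and the analogous controlled behavior in the pseudo-symmetric case, and the arithmetic conditions $3x \neq 2F$, $4x \neq 3F$ are precisely what prevent the special value $F/2$ from being disturbed when $F$ is even. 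So I would compute $\mathrm{PF}(T)$ directly: show any $y \in \mathbb{Z} \setminus T$ with $y + (T \setminus \{0\}) \subseteq T$ must equal $F$ (odd case) using that $S$ was symmetric, carefully tracking how the single swapped element affects the defining condition $y + t \in T$.

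The step I expect to be the main obstacle is precisely this verification that irreducibility is preserved — i.e. that no new pseudo-Frobenius number is created and none is lost by the swap. This is where all four numerical side-conditions ($2x - F \notin S$, $3x \neq 2F$, $4x \neq 3F$, $F - x < \min(\mathrm{msg}(S))$) must be used in concert, and it requires a case analysis on the parity of $F$ since the target sets $\mathrm{PF}$ differ. A cleaner route, which I would pursue if the direct computation gets unwieldy, is to show that $T$ is in fact the symmetric/pseudo-symmetric semigroup obtained from $S$ by the known ``one-step'' operations of \cite{forum} (the proposition being quoted is from that paper), so that the result reduces to citing the construction there; but since we are asked to prove it, the honest approach is the gap-exchange argument sketched above, with the arithmetic conditions doing exactly the bookkeeping needed to keep $\mathrm{L}(T)$ empty (odd case) or equal to $\{F/2\}$ (even case).
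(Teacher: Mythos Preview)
The paper does not prove this proposition: it is quoted verbatim from \cite[Proposition~2.9]{forum} and used as a black box. So there is no ``paper's own proof'' to compare against; your attempt is a sketch toward an independent proof of a cited result.

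As a sketch, your outline is in the right direction but has a real gap in the closure step. You assert that the only problematic sums are $(F-x)+s$, and then say the hypotheses ``guarantee $2(F-x)\notin\{x\}$'' and that $2x-F\notin S$ ``is what ensures $F-x$ behaves like a pseudo-Frobenius number.'' But you never use the irreducibility of $S$ to show $(F-x)+s\in T$ for a general $s\in S\setminus\{0,x\}$. That step genuinely requires the symmetric (or pseudo-symmetric) duality: in the symmetric case, $(F-x)+s\in S$ iff $x-s\notin S$, and $x-s\notin S$ because $x$ is a minimal generator (if $x-s\in S\setminus\{0\}$ then $x=(x-s)+s$ is decomposable; $x-s=0$ is excluded since $s\neq x$; $x-s<0$ is automatic). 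Then $(F-x)+s\neq x$ follows from $2x-F\notin S$. The condition $3x\neq 2F$ handles $2(F-x)\neq x$ as you note, and $4x\neq 3F$ is needed in the pseudo-symmetric case to keep $F/2$ out of the picture (e.g.\ to ensure $2(F-x)\neq F/2$ does not cause trouble). Without invoking the duality of $S$ explicitly, your closure argument is incomplete.

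The irreducibility part you correctly flag as the main obstacle, but what you wrote is a plan, not an argument: you would still need to show that for $y\notin T$ with $y\neq F$ (and $y\neq F/2$ in the even case) there is some $t\in T\setminus\{0\}$ with $y+t\notin T$, again by transporting the corresponding witness from $S$ through the swap. Since this is exactly the content of the cited proposition, the honest thing to do in the present paper is simply to cite \cite{forum}, which is what the authors do.
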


Let $\prec$ be the following binary relation on $\mathscr I(F)$: $S \prec S'$ if and only if there exists $S = S_0, S_1, \ldots, S_p = S'$ such that $S_{i+1}$ is obtained from $S_i$ by applying Proposition \ref{9}. In \cite{forum} it is shown that $\prec$ is an order relation (reflexive, transitive and antisymmetric) on $\mathscr  I(F)$ with first element equal to $C(F)$. In other words, the set $\mathscr  I(F)$  forms a rooted tree with root $C(F )$.

Therefore, one can compute all the irreducible numerical semigroup with Frobenius number $F$ starting from $C(F)$.

\begin{example}\label{10}
Let us compute $\mathscr I(11)$. Using Proposition \ref{9}, we compute the elements in $\mathscr I(11)$ that are immediately bigger than $$C(11)=\{0, 6,7,8,9,10,12,13 \ldots \} = \langle 6,7,8,9,10 \rangle,$$ namely $\langle 3,7 \rangle, \langle 4,6,9 \rangle$ and $\langle 5,7,8,9 \rangle$. By repeating the same construction with these three semigroups, we obtain that the first is maximal and the other two produce $\langle 2, 13 \rangle$ and $\langle 4, 5 \rangle$, respectively. As these two last are maximal, we are done.

The following figure summarizes the construction flow:
\begin{center}
\begin{tikzpicture}[line cap=round,line join=round,>=triangle 45,x=.75cm,y=.75cm]
%\draw[color=black] (0pt,-10pt) node[right] {\footnotesize $0$};
\clip(2,-1) rectangle (9,6);
\draw [->] (5,5) -- (3,3);
\draw [->] (5,5) -- (5,3);
\draw [->] (5,5) -- (7,3);
\draw [->] (5,2) -- (5,0);
\draw [->] (7,2) -- (7,0);
\draw (3.6,5.75) node[anchor=north west] {$\langle 6,7,8,9,10\rangle$};
\draw (2.25,3) node[anchor=north west] {$ \langle 3,7 \rangle$};
\draw (4.1,3) node[anchor=north west] {$\langle 4,6,9\rangle$};
\draw (6.25,3) node[anchor=north west] {$\langle 5,7,8,9\rangle$};
\draw (4.25,0) node[anchor=north west] {$\langle 2,13\rangle$};
\draw (6.35,0) node[anchor=north west] {$\langle 4,5\rangle$};
\draw (3.5,4.5) node[anchor=north west] {$8$};
\draw (5,4.5) node[anchor=north west] {$7$};
\draw (6,4.5) node[anchor=north west] {$6$};
\draw (5,1.5) node[anchor=north west] {$9$};
\draw (7,1.5) node[anchor=north west] {$7$};
\end{tikzpicture}
\end{center}
where each edge was marked with the corresponding $x$.
\end{example}

Now, let us write $b(S)$ for the cardinality of the set \[\left\{ x\in\mathrm{msg}(S) \mid\frac{\mathrm{F}(S)}{2}<x<\mathrm{F}(S) \right\}\] and define  $\mathscr B(F,t) :=\left\{S' \in \mathscr I(F) \mid b(S')\geq  \lceil \frac{t}{2}\rceil-1 \right\}$, being $\lceil - \rceil$ the ceiling operator. Observe that 
$\mathscr B(F,t) \neq \varnothing$, because $C(F) \in \mathscr B(F,t)$.

%For instance, $b(\langle 6,7,8,9,10\rangle)=5,\ b(\langle 3,7\rangle)=1,\ b(\langle4, 6,9\rangle)=2,\ b(\langle 5,7,8,9\rangle)=3, b(\langle 2,13\rangle)=0$ and $b(\langle 4,5\rangle)=0$.

\begin{theorem}\label{Th main}
With the notation above, $S$ is an almost symmetric semigroup with Frobenius number $F$ and type $t$ if and only if there exist $S' \in  \mathscr B(F,t)$ and a subset $A$ of $\mathrm{msg}(S')$ with cardinality $\lceil \frac{t}{2}\rceil-1$ such that $S = S' \setminus A,\ \mathrm{F}(S')/2 < x < F(S')$ and $x+y-\mathrm{F}(S') \not\in S'\setminus A$, for every $x,y \in A$.
\end{theorem}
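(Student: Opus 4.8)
The plan is to derive Theorem \ref{Th main} from Proposition \ref{3} by showing that, among all the data $(S', A)$ producing a given almost symmetric semigroup $S$ with $\mathrm{F}(S)=F$ and $\mathrm{t}(S)=t$, one can always choose $S'$ to be irreducible with $\mathrm{F}(S')=F$ and $\mathrm{t}(S')=1$, i.e.\ symmetric, so that the type formula $\mathrm{t}(S)=2\#A+\mathrm{t}(S')$ in Proposition \ref{3} forces $\#A = \lceil t/2\rceil - 1$. I would split the argument into the two implications.

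For the ``if'' direction: suppose $S' \in \mathscr B(F,t)$ and $A \subseteq \mathrm{msg}(S')$ with $\#A = \lceil t/2\rceil - 1$, satisfying the stated inequalities and the non-membership condition $x+y-F \notin S'\setminus A$. Since $S'$ is irreducible with Frobenius number $F$ its type is $1$ if $F$ is odd and $2$ if $F$ is even; either way $\mathrm{t}(S') = 2\lfloor F/2\rfloor - F + 2 \in\{1,2\}$, and the parity of $t$ matches (because $F+t$ is even). Hence $2\#A + \mathrm{t}(S') = 2(\lceil t/2\rceil-1) + \mathrm{t}(S') = t$. I then check that hypotheses (a)--(c) of Proposition \ref{3} hold: (a) is $S = S'\setminus A$ by definition; (b) is exactly the inequality $F/2 < x < F$ we assumed (note $\mathrm{F}(S')=F$); and (c) is the condition $x+y-F\notin S$, where $S = S'\setminus A$, which is precisely what we assumed. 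Here I should note that $\#A \leq b(S')$ is needed for such an $A$ to exist inside $\{x\in\mathrm{msg}(S'): F/2<x<F\}$, which is guaranteed by $S'\in\mathscr B(F,t)$. Proposition \ref{3} then yields that $S$ is an AS-semigroup with $\mathrm{F}(S)=F$ and $\mathrm{t}(S) = 2\#A + \mathrm{t}(S') = t$.

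For the ``only if'' direction: let $S$ be an AS-semigroup with $\mathrm{F}(S)=F$, $\mathrm{t}(S)=t$. By Proposition \ref{3} there is \emph{some} irreducible $S''$ and $A''\subseteq\mathrm{msg}(S'')$ with $S = S''\setminus A''$, the two geometric conditions, and $t = 2\#A'' + \mathrm{t}(S'')$. The key step is to upgrade $S''$ to a \emph{symmetric} (type $\leq 2$, matching parity) witness. I would do this as follows: the irreducible semigroup $S''$ sits in the rooted tree $\mathscr I(F)$ below some element obtained by repeatedly applying Proposition \ref{9}; more to the point, one wants an irreducible $S'$ with $S\subseteq S'$, $\mathrm{F}(S')=F$, $\mathrm{msg}(S')\setminus S \subseteq \{x : F/2 < x < F\}$, and $S = S'\setminus(\mathrm{msg}(S')\setminus S)$. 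Concretely, take $S' := S \cup \{x : F/2 < x, \ x\neq F\}$; this is the smallest ``maximal embedding-type'' candidate, it contains $S$, has Frobenius number $F$, and — because $S$ is almost symmetric so that every gap $>F/2$ other than $F$ and the possible $F/2$ lies in $\mathrm{PF}(S)\subseteq\mathrm{L}(S)\cup\{F\}$ — one checks $S'$ is a numerical semigroup; moreover all its minimal generators exceeding $F/2$ are either $\geq F/2+1$ filling generators or the original large generators of $S$, and by Proposition \ref{5}, $S' = C(F)$ is the unique irreducible one with all generators above $F/2$... but this is exactly where care is needed, since $S'$ need not equal $C(F)$ if $S$ has small generators. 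I therefore expect the \textbf{main obstacle} to be precisely this: showing that one may always replace the witness $S''$ of Proposition \ref{3} by one lying in $\mathscr I(F)$ — i.e.\ genuinely irreducible — whose removed generator set $A$ has the \emph{minimal} cardinality $\lceil t/2\rceil - 1$, equivalently that the ``base'' $S'$ can be taken with $\mathrm{t}(S')\in\{1,2\}$ rather than an arbitrary odd/even value. This should follow by taking $A$ to be \emph{all} minimal generators of $S''$ lying strictly between $F/2$ and $F$ that satisfy condition (c) — enlarging $A''$ re-adds generators to the base, pushing its type down to the minimum $2\lfloor F/2\rfloor - F + 2$ — together with the observation that $b(S')\geq \#A = \lceil t/2\rceil-1$, so $S'\in\mathscr B(F,t)$.

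Once the reduction to a minimal-type base is in hand, the remaining bookkeeping is routine: verify the geometric conditions transfer verbatim (they are conditions on the pair $(F, A)$ and on membership in $S = S'\setminus A$, both unchanged), and conclude $S = S'\setminus A$ with $S'\in\mathscr B(F,t)$ and $\#A = \lceil t/2\rceil-1$. I would present the proof by first recording the parity identity $\mathrm{t}(S') = 2\lfloor F/2\rfloor - F + 2$ for irreducible $S'$ with $\mathrm{F}(S')=F$, then doing ``if'' (immediate from Proposition \ref{3}), then ``only if'' with the enlargement-of-$A$ argument as the heart of the matter.
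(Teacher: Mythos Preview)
Your ``if'' direction is essentially correct and matches the paper. The problem is the ``only if'' direction: you have manufactured an obstacle that does not exist, and then sketched an elaborate (and shaky) workaround for it.

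The point you are missing is that \emph{every} irreducible numerical semigroup already has type $1$ or $2$: it is symmetric (type $1$) when its Frobenius number is odd and pseudo-symmetric (type $2$) when its Frobenius number is even. This is recorded in the paper right after Lemma~\ref{2}. Hence the irreducible $S''$ that Proposition~\ref{3} hands you already has $\mathrm{t}(S'')\in\{1,2\}$, with the value fixed by the parity of $F$. There is nothing to ``upgrade''; no enlargement of $A''$, no passage through the tree of Proposition~\ref{9}, and no appeal to $C(F)$ is needed. The formula $t=2\#A''+\mathrm{t}(S'')$ then gives $\#A''=\lceil t/2\rceil-1$ immediately, exactly as in your parity computation for the other implication. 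Finally, since $A''\subseteq\{x\in\mathrm{msg}(S''): F/2<x<F\}$, one has $b(S'')\ge\#A''=\lceil t/2\rceil-1$, so $S''\in\mathscr B(F,t)$ automatically.

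This is precisely the paper's proof: apply Proposition~\ref{3}, read off $\#A$ from the type formula using that $\mathrm{t}(S')$ is forced by the parity of $F$, and note that the converse is immediate. Your proposed detour through enlarging $A$ and constructing $S'=S\cup\{x:F/2<x,\ x\neq F\}$ is unnecessary, and as you yourself flag, the claim that this $S'$ is irreducible is dubious in general.
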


\begin{proof}
Let $S$ be an AS-semigroup  with Frobenius number $F$ and type $t$. By Proposition \ref{3}, there exist $S' \in \mathscr I(F)$ and $A \subseteq \mathrm{msg}(S')$ such that $S = S' \setminus A, \mathrm{F}(S')/2 < x < F(S')$ and $x+y-\mathrm{F}(S') \not\in S'\setminus A$, for every $x,y \in A$. Moreover, since $t = \mathrm{t}(S) = 2 \# A + \mathrm{t}(S')$, we obtain that $\# A = \frac{t+\delta}2-1$ where $\delta$ is $0$ or $1$ depending on whether $F$ is even or odd. Now, $F+t$ is an even number, we conclude that $\# A =  \lceil \frac{t}{2}\rceil-1$. The converse follows directly from  Proposition \ref{3}.
\end{proof}

The following algorithm computes all AS-semigroups with Frobenius number $F$ and type $t$.

\begin{algorithm}\label{12}\mbox{}\par
\textsc{Input}: $F$ and $t$ two positive integers.\par
\textsc{Output}: $\mathscr A(F,t)$.
\begin{enumerate}[ 1)]
\item Set $\mathscr A(F,t) = \varnothing$.
\item If $t > F$ or $F+t$ is odd  then return $\mathscr A(F,t)$.
\item Compute the set $\mathscr I(F)$.
\item Compute  the set $\mathscr B(F,t) =\left\{S' \in \mathscr I(F) \mid b(S')\geq  \lceil\frac{t}{2}\rceil-1 \right\}$.
\item For each $S' \in \mathscr B(F,t)$ compute
\begin{enumerate}[ 4.1)]
\item $\mathcal A(S')=\Big\{A \subseteq \mathrm{msg}(S') \mid \#A=\lceil \frac{t}{2}\rceil-1,\ \frac{F}{2}<x<F$ and $x+y-F\not\in S' \setminus A$, for every $x,y \in A\Big\}$ 
\item Set $\mathscr D(S')=\left\{ S' \setminus A \mid A\in \mathcal A(S')\right\} $ 
\item Append $\mathscr D(S')$ to $\mathscr A(F,t)$.
\end{enumerate}
\item  Return $\mathscr A(F,t)$.
\end{enumerate}
\end{algorithm}

The correctness of this algorithm relies on Theorem \ref{Th main}.
The algorithm ends because the sets $\mathscr B(F,t)$ and $\mathcal A(S'),\ S' \in \mathscr B(F,t)$, are empty or finite. 

\begin{example}
Let us compute the set $\mathscr A(11,7)$. By Example \ref{10} we have that $\mathscr B(11,7)=\left\{\langle 6,7,8,9,10\rangle,
\langle 5,7,8,9\rangle\right\}$.
\begin{itemize}
\item If $S'=\langle 6,7,8,9,10\rangle$, then the subsets $A$ of $\left\{6,7,8,9,10\right\}$ with cardinality $3$ such that  $x+y-11\not\in S'\setminus A$, for every $x,y \in A$ are $\{6,7,8\}$ and $\{6,7,9\}$ 
\item If $S'=\langle 5,7,8,9\rangle$, then there exists no $A$ possible.
\end{itemize}
Hence, by applying Proposition \ref{3},
\begin{align*}
\mathscr A(11,7) & =\left\{ \langle 6,7,8,9,10\rangle\setminus \{6,7,8\}, \langle 6,7,8,9,10\rangle\setminus \{6,7,9\}\right\}\\ & =  \left\{\langle 9,10,12,13,14,15,16,17\rangle,\langle 8,10,12,13,14,15,17,19\rangle\right\}
\end{align*}
We can verify this computation by using the \texttt{NumericalSgps} package (see \cite{numericalsgps}). It suffices to change the first line in the code provided in \cite[Example 5]{CA} by 
\texttt{almostnsfromirreducibleAlt := function(s,t)} and set the variable \texttt{pow} as \texttt{Combinations(msg,CeilingOfRational(t/2)-1)}. 

\medskip
The following piece of code will do the rest:
\begin{verbatim}
   assemigroupswithfrobeniusnumberandtype:=function(f,t)
      return Union(Set(     
      IrreducibleNumericalSemigroupsWithFrobeniusNumber(f),
      s->almostnsfromirreducible(s,t)
      ));
   end;
\end{verbatim} 
Indeed, the command \texttt{assemigroupswithfrobeniusnumberandtype(11,7)} will give us all the almost symmetric semigroups of Frobenius number 11 and type 7.
\end{example}

\begin{remark}
Let $g$ be a positive integer. By Proposition \ref{1}, the set of all AS-semigroups with genus $g$ and type $t$ and the set of all AS-semigroups with Frobenius number $F$ and genus $g$ are $\mathscr A(2g-t,t)$ and $\mathscr A(F,2g-F)$, respectively.
Notice that,  by Proposition \ref{6}, there exists at least one AS-semigroup with genus $g$ and type $t$ for every pair $(g,t)$ of positive integers such that $t \leq g$. Analogously, there exists at least one AS-semigroup with Frobenius number $F$ and genus $g$ for every pair $(F,g)$ of positive integers  such that $g \leq F$.
\end{remark}

We, finally, observe that since $t \leq F$ we can compute the set of all AS-semigroup with fixed Frobenius number $F$, $\mathscr A(F)$,  that is nothing but the union of the sets $\mathscr A(F,2i-F)$ for every $i = \lceil \frac{F+1}2 \rceil, \ldots, F$.

\begin{verbatim}
assemigroupswithfrobeniusnumber := function(F)
    local irr, irrt, c;

    irr:=IrreducibleNumericalSemigroupsWithFrobeniusNumber(F);
    if IsOddInt(F) then 
      c:=1; 
    else 
      c:=2;
    fi;
    irrt:=Cartesian(irr,[c..Int(F/2)]);
    return Concatenation(irr,Concatenation(List(irrt,
     st->almostnsfromirreducibleAlt(st[1],2*st[2]+
     RemInt(F,2)))));
end;
\end{verbatim}

%%%%%%%%%%%%%%%%%%%%%%%%%%%%%%%%%%%%%%%%%%%%%%%%%%%%%%%%%%%
%%%%%%%%%%%%%%%%%%%%%%%%%%%%%%%%%%%%%%%%%%%%%%%%%%%%%%%%%%%
\section{The desceding algorithm}\label{S4}

Let $F$ be a positive integer and define $M(F)$ to be the numerical semigroup $\{0, F+1, F+2, \ldots \}$. Clearly, this semigroup has Frobenius number and type equal to $F$. Moreover, since $\mathbb{N} \setminus M(F) = \mathrm{PF}(M(F))= \{1, \ldots, F\}$, we have that $g = \frac{F+F}2$. So, by Proposition \ref{1}, $M(F)$ is a almost symmetric.

\begin{lemma}
The unique numerical semigroup with both Frobenius number and type equal to $F$ is $M(F)$.
\end{lemma}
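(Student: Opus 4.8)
The plan is to show that any numerical semigroup $S$ with $\mathrm{F}(S) = F$ and $\mathrm{t}(S) = F$ must coincide with $M(F) = \{0, F+1, F+2, \ldots\}$. Since $M(F)$ itself has Frobenius number and type equal to $F$ (as noted just before the statement), it suffices to prove uniqueness, i.e. that no other such semigroup exists. The natural strategy is to count gaps. Recall that $F \notin S$ and that all integers greater than $F$ belong to $S$, so $\mathbb{N} \setminus S \subseteq \{1, 2, \ldots, F\}$, which already gives $\mathrm{g}(S) \leq F$. I would then bring in the type: every pseudo-Frobenius number is a gap, so $\mathrm{PF}(S) \subseteq \mathbb{N} \setminus S$, whence $\mathrm{t}(S) \leq \mathrm{g}(S)$.

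Combining these, $F = \mathrm{t}(S) \leq \mathrm{g}(S) \leq F$, forcing $\mathrm{g}(S) = F$ and also $\mathrm{t}(S) = \mathrm{g}(S)$. The first equality, $\mathrm{g}(S) = F$, means $\mathbb{N} \setminus S = \{1, 2, \ldots, F\}$ exactly (it is a subset of this $F$-element set with $F$ elements), hence $S = \{0\} \cup \{F+1, F+2, \ldots\} = M(F)$. This is essentially immediate once the inequality chain is in place.

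Alternatively, and perhaps more transparently, I would argue directly from $\mathrm{t}(S) = \mathrm{g}(S)$: this forces every gap of $S$ to be a pseudo-Frobenius number. In particular $1 \in \mathbb{N}\setminus S$ would have to satisfy $1 + (S \setminus \{0\}) \subseteq S$; iterating, if $1$ is a gap then $1 + s \in S$ for every nonzero $s \in S$, and one can show this pushes everything up so that the only gaps are $1, \ldots, F$. Either route works; I would favor the clean inequality argument $\mathrm{t}(S) \leq \mathrm{g}(S) \leq \mathrm{F}(S)$ since both bounds are standard and the conclusion drops out by a squeeze.

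The main (very minor) obstacle is simply justifying $\mathrm{t}(S) \leq \mathrm{g}(S)$ and $\mathrm{g}(S) \leq \mathrm{F}(S)$ cleanly; both are elementary, the former because $\mathrm{PF}(S) \subseteq \mathbb{N} \setminus S$ by definition of pseudo-Frobenius number, and the latter because $\mathbb{N}\setminus S \subseteq \{1,\ldots,\mathrm{F}(S)\}$. One could also cite \eqref{ecu3}, which gives $\mathrm{g}(S) \geq (\mathrm{F}(S) + \mathrm{t}(S))/2$; setting $\mathrm{t}(S) = \mathrm{F}(S) = F$ yields $\mathrm{g}(S) \geq F$, and combined with $\mathrm{g}(S) \leq F$ we again get $\mathrm{g}(S) = F$ and hence $S = M(F)$. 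There is really no hard part here; the statement is a warm-up observation anchoring the descending algorithm at its root $M(F)$.
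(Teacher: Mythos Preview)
Your proposal is correct and essentially matches the paper's proof: both argue by squeezing the genus, using $\mathrm{g}(S)\leq F$ together with a lower bound to force $\mathrm{g}(S)=F$ and hence $\mathbb{N}\setminus S=\{1,\ldots,F\}$. The paper's lower bound is precisely the inequality \eqref{ecu3} you mention at the end, giving $\mathrm{g}(S)\geq(\mathrm{F}(S)+\mathrm{t}(S))/2=F$; your first route via $\mathrm{t}(S)\leq\mathrm{g}(S)$ is an equally valid and slightly more direct variant of the same idea.
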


\begin{proof}
Let $S$ be a numerical semigroup such that $F(S)=t(S)=F$. Since $g(S) \leq F$ and, by \eqref{ecu3}, $g(S) \geq \frac{F(S)+t(s)}2=F$. We conclude that $\mathbb N \setminus S = \{1, \ldots, F\}$, that is to say, $S = M(F)$.  
\end{proof}

In this section, we will depart from $M(F)$, i.e. the almost symmetric semigroup with Frobenius number and type equal to $F$, in order to ``descend'' to the set of almost symmetric semigroup with Frobenius number $F$ and type $t \leq F$.

We recall that the least positive integer in a numerical semigroup $S$ is called the \textbf{multiplicity} of $S$ and it is denoted $m(S)$. The following result relates the multiplicity and type of a numerical semigroup.

\begin{proposition}\label{15}\cite[Corollary 2.23]{libro}.
Let $S$ be a numerical semigroup. Then \[m(S) \geq t(S)+1\]
\end{proposition}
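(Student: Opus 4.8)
The statement to prove is Proposition~\ref{15}, namely that $m(S) \geq t(S)+1$ for every numerical semigroup $S$. Since the paper cites this as \cite[Corollary 2.23]{libro}, I will sketch the standard argument.

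The plan is to exhibit $t(S)$ pairwise distinct gaps of $S$ that are all ``covered'' by the multiplicity, forcing $m(S) \geq t(S)+1$. Concretely, I would first recall that $\mathrm{PF}(S) = \{f_1, \ldots, f_{t}\}$ with $t = \mathrm{t}(S)$, and let $m = m(S)$ be the multiplicity. The key observation is that for each pseudo-Frobenius number $f_i$, the element $f_i + m$ lies in $S$, since $f_i + (S \setminus \{0\}) \subseteq S$ and $m \in S \setminus \{0\}$. Then I would consider the $m$ consecutive integers $f_i - (m-1), f_i - (m-2), \ldots, f_i$; I want to argue that these account for the gap structure in a way that bounds $t$.

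The cleaner route is via the Apéry set. First I would recall that $\mathrm{Ap}(S,m) := \{s \in S \mid s - m \notin S\}$ has exactly $m$ elements $w_0 = 0, w_1, \ldots, w_{m-1}$, one in each congruence class modulo $m$. Then I would invoke (or reprove) the characterization that $\mathrm{PF}(S) = \{w - m \mid w \in \mathrm{Maximals}_{\leq_S}(\mathrm{Ap}(S,m))\}$, where $\leq_S$ is the partial order $x \leq_S y \iff y - x \in S$. Since $w_0 = 0$ is the unique minimal element of $\mathrm{Ap}(S,m)$ with respect to $\leq_S$ and $m \geq 2$ (so $0$ is not maximal, as $w_1 \neq 0$ and $0 \leq_S w_1$), the set of maximal elements has cardinality at most $m - 1$. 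Hence $t(S) = \#\mathrm{PF}(S) \leq m - 1$, which is the claim.

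The main obstacle is that a fully self-contained proof requires establishing the correspondence between $\mathrm{PF}(S)$ and the maximal elements of the Apéry set, which is itself a small theorem. If I want to avoid citing it, I would instead argue directly: suppose $t(S) \geq m(S)$, i.e. there are at least $m$ pseudo-Frobenius numbers $f_1, \ldots, f_m$; since there are only $m$ congruence classes mod $m$, either two of them are congruent mod $m$ or one is congruent to $0$. In the first case, say $f_i \equiv f_j \pmod m$ with $f_i < f_j$; then $f_j - f_i$ is a positive multiple of $m$, hence lies in $S \setminus \{0\}$, so $f_i + (f_j - f_i) = f_j \in S$, contradicting $f_j \notin S$. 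In the second case, $f_i \equiv 0 \pmod m$ means $f_i$ is a positive multiple of $m$, hence $f_i \in S$, again a contradiction. Therefore $t(S) \leq m(S) - 1$, i.e. $m(S) \geq t(S) + 1$. This direct pigeonhole argument is the approach I would actually write down, as it is short and uses only that $m \cdot k \in S$ for all $k \geq 1$.
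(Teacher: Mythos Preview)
The paper does not prove this proposition at all; it is simply quoted from \cite[Corollary 2.23]{libro} without argument. Your direct pigeonhole proof is correct and is essentially the standard one: pseudo-Frobenius numbers are positive integers lying in pairwise distinct nonzero residue classes modulo $m(S)$, so there can be at most $m(S)-1$ of them. (Your Ap\'ery-set formulation is exactly how the cited reference proves it, and your second, self-contained version just unpacks that argument.)
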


\begin{theorem}\label{16}
Let $F \geq 5$ and $t \leq F$ be a positive integer greater than $2$ such that $F+t$ is even. Then $S$ is an almost symmetric numerical semigroup with Frobenius number $F$ and type $t-2$ if and only if there exist an almost symmetric numerical semigroup $S'$ with Frobenius number $F$ and type $t$, $x \in [t-1,m(S')-1]$ such that 
\begin{enumerate}[ (a)]
\item $S = \{x\} \cup S'$.
\item $-x + \mathbb{N} \setminus S \subseteq \mathbb{Z} \setminus S$.
\item $\big(x + (\mathrm{PF}(S') \setminus \{x,F-x\}) \big) \subseteq S$.
\end{enumerate}
In this case, $\mathrm{PF}(S') = \mathrm{PF}(S) \cup \{x,F-x\}.$
\end{theorem}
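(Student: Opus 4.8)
The plan is to prove both implications by carefully tracking the gap set, the pseudo-Frobenius set, and Proposition~\ref{1}(c). Throughout, set $F = \mathrm{F}(S')$. For the forward direction, suppose $S = \{x\} \cup S'$ satisfies (a)--(c) with $x \in [t-1, m(S')-1]$. First I would check that $S$ is a numerical semigroup: closure under addition follows from (a) together with the fact that $x + S' \subseteq S'$ (since $x < m(S') \le$ every nonzero element's behaviour, actually $x$ being a gap of $S'$ with $x + (S'\setminus\{0\})\subseteq S'$ would need $x\in\mathrm{PF}(S')$, so more care is needed here — one uses that $x \ge t-1$ and the structure forces $x+s' \in S'$), and that $\mathrm{F}(S) = F$ still, because $x < F$ (as $x \le m(S')-1 < F$) and $F \notin S$ by (b) applied suitably, i.e. $F$ remains the largest gap. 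Then I would compute $\mathrm{PF}(S)$: condition (b) says precisely that adjoining $x$ does not create new pseudo-Frobenius numbers below $x$ from old gaps, condition (c) ensures the surviving elements of $\mathrm{PF}(S')$ other than $x$ and $F-x$ are no longer pseudo-Frobenius (they now have $x$ added and land in $S$), while $x$ itself and $F-x$ become pseudo-Frobenius of $S$ (for $F-x$: it is a gap of $S$ since $F-x \notin S'$ because $x$ was a gap of first type relative to... here one invokes $2x - F$ considerations, or rather that $x \in S$, $F \in \mathbb{Z}\setminus S$, so $(F-x) + x = F \notin S$ forces $F-x \notin S$; and $(F-x) + (S \setminus \{0\}) \subseteq S$ needs checking). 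This should yield $\mathrm{PF}(S') = \mathrm{PF}(S) \cup \{x, F-x\}$ and hence $\mathrm{t}(S) = \mathrm{t}(S') - 2 = t-2$. Finally, $\mathrm{g}(S) = \mathrm{g}(S') - 1$ since $S = \{x\} \cup S'$, and $\mathrm{g}(S') = (F+t)/2$ by almost symmetry of $S'$, so $\mathrm{g}(S) = (F + t - 2)/2 = (\mathrm{F}(S) + \mathrm{t}(S))/2$; by Proposition~\ref{1}(c), $S$ is almost symmetric. One must separately verify $F + (t-2)$ is even (immediate) and that $t - 2 \ge 1$, which may need the hypothesis $t > 2$.

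For the converse, suppose $S$ is almost symmetric with $\mathrm{F}(S) = F$ and $\mathrm{t}(S) = t-2$. I need to produce $S'$ and $x$. The natural choice is to take $x = m(S) $ if... no — rather, one adjoins the smallest gap of $S$ that can be removed, i.e. set $x$ to be an appropriate element and $S' = S \setminus \{x\}$. The candidate is $x = m(S)$ is wrong since $m(S) \in S$; instead I would look among the gaps of $S$. By Proposition~\ref{15}, $m(S) \ge t(S) + 1 = t-1$, so $m(S) - 1 \ge t-2$; the gaps of $S$ include $\{1, \ldots, m(S)-1\}$, all of which are gaps of the second type or pseudo-Frobenius numbers — in fact since $S$ is almost symmetric, by Proposition~\ref{1}(b), $\mathrm{PF}(S) = \mathrm{L}(S) \cup \{F\}$, and one can show that the gaps in $[1, m(S)-1]$ that are NOT in $\mathrm{N}(S)$ lie in $\mathrm{L}(S) \subseteq \mathrm{PF}(S)$. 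The correct choice of $x$ should be: $x$ is the largest element such that $S' := S \cup \{\,?\,\}$... I realize the arrow goes the other way — $S = \{x\} \cup S'$ means $x \notin S'$ but $x \in S$, so $S' = S \setminus \{x\}$ with $x$ a \emph{minimal generator} of $S$ that is small. So $x \in \mathrm{msg}(S)$, $x \le$ something; one takes $x = m(S)$ and checks $m(S) \in [t-1, m(S)-1]$... that fails. The resolution: $x$ ranges in $[t-1, m(S')-1]$ where $m(S') $ could exceed $m(S)$. So from $S$, the element $x$ to be removed is $m(S)$ only if $m(S) \ge t-1$, which holds, but then we'd need $m(S) \le m(S') - 1$, and $m(S') = $ next element of $S$ after removing $m(S)$, so indeed $m(S) \le m(S')-1$ automatically. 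Good. So set $x = m(S)$, $S' = S \setminus \{m(S)\}$, and verify $S'$ is a numerical semigroup with $\mathrm{F}(S') = F$ (need $m(S) \ne F$, true since $F \ge 5 > $ cannot conclude directly, but $m(S) \le F$ and if $m(S) = F$ then $S = M(F)$ has type $F = t-2$, handled separately), that $S'$ is almost symmetric with type $t$, and that conditions (a)--(c) hold — (a) is the definition, (b) and (c) translate the statement that removing $m(S)$ increases the type by exactly $2$, adding back $m(S)$ and $F - m(S)$ to $\mathrm{PF}$.

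The main obstacle I anticipate is the bookkeeping of pseudo-Frobenius numbers under the operation $S \leftrightarrow S \cup \{x\}$: showing that conditions (b) and (c) are exactly equivalent to ``$\mathrm{PF}(S') = \mathrm{PF}(S) \sqcup \{x, F-x\}$'' requires showing $F - x$ is genuinely a new pseudo-Frobenius number of $S'$ (not already present, and actually satisfying the defining property), and that $x$ itself becomes pseudo-Frobenius of $S'$, using almost symmetry of both to control $\mathrm{L}$ versus $\mathrm{N}$. In particular, one must rule out degenerate coincidences like $x = F - x$ (i.e. $2x = F$), which is where the parity hypothesis $F + t$ even, combined with $x \ge t-1$ and a pseudo-symmetry exclusion, comes in — if $2x = F$ then $x = F/2 \in \mathrm{L}(S')$ would be forced and the count would be off by one rather than two. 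Handling this case, and the boundary case $S = M(F)$ or $S' = M(F)$, will be the delicate part; the rest is the same type of argument used in the proof of Proposition~\ref{3} in \cite{CA}, adapted from removing minimal generators of an irreducible semigroup to removing the multiplicity of an almost symmetric one.
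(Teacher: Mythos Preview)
Your overall architecture matches the paper's: in the direction where $S$ is given, set $x=m(S)$ and $S'=S\setminus\{x\}$; in the other direction, verify $S=\{x\}\cup S'$ is almost symmetric via the genus formula. The paper does exactly this, and even dispatches the boundary case $S=M(F)$ the way you suggest (it would force $\mathrm{t}(S)=F>t-2$).

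However, your bookkeeping of pseudo-Frobenius numbers in the direction ``$(S',x)\Rightarrow S$'' is inverted, and this is a genuine gap, not just sloppiness. You write that condition~(c) makes the elements of $\mathrm{PF}(S')\setminus\{x,F-x\}$ ``no longer pseudo-Frobenius'' of $S$, and that ``$x$ itself and $F-x$ become pseudo-Frobenius of $S$''. Both claims are wrong. Since $x\in S$, $x$ cannot lie in $\mathrm{PF}(S)$; and $(F-x)+x=F\notin S$ shows $F-x\notin\mathrm{PF}(S)$ either. Conversely, for $y\in\mathrm{PF}(S')\setminus\{x,F-x\}$ one has $y\notin S$ and $y+s'\in S'\subseteq S$ for every $s'\in S'\setminus\{0\}$; the only new nonzero element of $S$ is $x$, and condition~(c) is precisely what guarantees $y+x\in S$. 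So (c) is the condition making these $y$ \emph{remain} pseudo-Frobenius in $S$, giving $\mathrm{PF}(S')\setminus\{x,F-x\}\subseteq\mathrm{PF}(S)$ and hence $\mathrm{t}(S)\geq t-2$. Similarly, your reading of~(b) is off: (b) is equivalent (by contraposition) to $x+S\subseteq S$, i.e.\ it is exactly the closure condition ensuring $S$ is a semigroup, not a statement about new pseudo-Frobenius numbers.

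The paper avoids trying to compute $\mathrm{PF}(S)$ exactly. It instead combines the one-sided containment above with the genus inequality $\mathrm{g}(S)\geq\frac{\mathrm{F}(S)+\mathrm{t}(S)}{2}$: from $\mathrm{g}(S)=\mathrm{g}(S')-1=\frac{F+t}{2}-1$ one gets $\mathrm{t}(S)\leq t-2$, and the containment gives $\mathrm{t}(S)\geq t-2$; equality in the genus bound then yields almost symmetry by Proposition~\ref{1}(c). The same trick, with the inequality run the other way, handles the direction where $S'$ is built from $S$. Once $\mathrm{t}(S)=t-2$ is known, the containment becomes an equality of sets by cardinality, which is how $\mathrm{PF}(S')=\mathrm{PF}(S)\cup\{x,F-x\}$ falls out at the end (and, in the construction direction, how condition~(c) is ultimately verified). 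If you reorganize your argument along these lines---genus inequality for one bound, one-sided $\mathrm{PF}$ containment for the other---the proof goes through cleanly and the $2x=F$ worry you flag never needs separate treatment.
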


\begin{proof}
Suppose that $S$ is almost symmetric with Frobenius number $F$ and type $t-2$. Set $x = m(S)$ and define $S'$ to be the numerical semigroup generated by $S \setminus \{x\}$. Notice that the conditions (a)-(c) are trivially satisfied by construction.

Let us prove that $S'$ is almost symmetric with Frobenius number $F$ and type $t$. 

Clearly, $F(S')$ is either $x$ or $F$. If $F(S') = x$, then $S = M(F)$ has type $F > t-2$ that is not possible by hypothesis. So $F(S') = F$. 

Now, we prove that the type of $S'$ is $t$. By definition, $g(S')=g(S)+1$. Thus, since \begin{equation}\label{ecu1} \frac{F+t(S')}2 \leq g(S') = g(S)+1=\frac{F+t-2}2+1 = \frac{F+t}2,\end{equation} we obtain that $t(S') \leq t$. Furthermore, $F-x \not\in S'$, otherwise $F=(F-x)+x \in S$, and $x=F-(F-x) \not\in S'$. Thus, $\{x,F-x\} \subseteq PF(S')$. Now, since $y + S \setminus \{0\} \subseteq S$, for every $y \in PF(S)$, we have that $y+S' \setminus \{0\} \subseteq S$, for every $y \in PF(S)$. If $y+s' = x$, for some $s' \in S' \setminus \{0\},$ then $s' = x-y \leq x = m(S) < m(S') \leq s'$, a contradiction.  Therefore $PF(S) \subseteq PF(S') \setminus \{x,F-x\},$ in particular, $t-2 = t(S) \leq t(S')-2,$ that is to say, $t \leq t(S')$. Thus, $t(S') = t$.

From \eqref{ecu1} it follows that $g(S') = \frac{F(S')+t(S')}2$; so, by Proposition \ref{1}, we conclude that $S'$ is almost symmetric.

Conversely, suppose that there exist an almost symmetric numerical semigroup $S'$ with Frobenius number $F$ and type $t$, and $x \in [t-1,m(S')-1]$ satisfying (a)-(c). Let $S = \{x\} \cup S'$. Given $y \in S$, then $x+y \in S$; otherwise, by condition (b), $y = -x+(x+y) \in \mathbb{Z} \setminus S$. Therefore, $S$ is a semigroup that is numerical because $\mathbb{N} \setminus S \subset \mathbb{N} \setminus S'$. Moreover, since $x \leq m(S') - 1 < F$, we have that $F(S) = F$. 

Let us see now that $t(S) = t-2$. On the one hand we have that $g(S) = g(S')-1$. Thus, \begin{equation}\label{ecu2} \frac{F+t(S)}2 \leq g(S) = g(S')-1=\frac{F+t}2-1 = \frac{F+t-2}2,\end{equation} and consequently, $t(S) \leq t-2$. On the other hand, if $y \in \mathrm{PF}(S') \setminus \{x,F-x\}$, then $y + z \in S' \subset S$, for every $z \in S'$ and, by condition (b), $x+y \in S.$ Thus, $\mathrm{PF}(S') \setminus \{x,F-x\} \subseteq \mathrm{PF}(S)$ and, consequently, $t(S')-2 = t-2 \leq t(S)$. Thus, $t(S) = t-2$. 

From \eqref{ecu2} it follows that $g(S) = \frac{F(S)+t(S)}2$; so, by Proposition \ref{1}, we obtain that $S$ is almost symmetric. 

Notice that in both implications we have obtained that $\mathrm{PF}(S') = \mathrm{PF}(S) \cup \{x,F-x\},$ as immediate consequence. 
\end{proof}

Observe that the set, $\cup_{i=t}^F \mathscr{A}(F,i)$, of almost symmetric numerical semigroups with Frobenius number $F \geq 1$ and type greater than or equal to $t \leq F$ forms a rooted tree with root $M(F)$. Thus, using Theorem \ref{16}, we can formulate an iterative algorithm for computing the whole set $\cup_{i=t}^F \mathscr{A}(F,i)$, where the cases $1 \leq F \leq 4$ can be obtained by direct computation.

Obviously if $t = 1$, our algorithm will compute the whole set $\mathscr{A}(F)$ of almost symmetric numerical semigroups with Frobenius number $F$.

\begin{algorithm}\label{17} 
The following GAP (\cite{GAP}) functions compute the whole set \[\bigcup_{i=t}^F \mathscr{A}(F,i)\] for $1 \leq t \leq F,\ F+t$ even and $F \geq 3$. The first function is nothing but the recursive step, while the second one is the core function that follows from Theorem \ref{16}. The function \texttt{NumericalSemigroupByGaps} from the 
\texttt{NumericalSgps} package (\cite{numericalsgps}) is required.

\medskip
\begin{verbatim}
assemigroupswithfrobeniusnumberuptotype := function(F,t)
  local L,sg,PF,Ga,N;
  L:=[[[F+1 .. 2*F+1],[1 .. F],[1 .. F]]];
  sg:=Union([F-1],[F+1 .. 2*F+1]);
  PF:=Difference([1 .. F],[F-1,1]);
  Ga:=Difference([1 .. F],[F-1]);
  Append(L,[[sg,PF,Ga]]);

  N:=[[sg,PF,Ga]];
  repeat
    N:=Concatenation(List(N, t->CallFuncList(auxiliar,t)));
    Append(L,N);
  until Length(N[1][2]) < t+2;
  return List(L, t->NumericalSemigroupByGaps(t[3]));
end;

auxiliar := function(sg,PF,Ga)
    local L,m,t,i,sg1,PF1,Ga1;
    L := []; m:=sg[1]; t:=Length(PF); F:=PF[t];
    for i in [t-1 .. m-1] do
      sg1:=Union([i],sg);
      PF1:=Difference(PF,[i,F-i]);
      Ga1:=Difference(Ga,[i]);
      if  IsSubset(Ga1,Filtered(Ga1-i,j->(j>0))) 
       and
         (Intersection(PF1+i, Ga1) = [ ]) then
       Append(L,[[sg1,PF1,Ga1]]);
      fi;
    od;
    return L;
end;
\end{verbatim}

A much more refined implementation of our algorithm can be found at the development version site of the \texttt{NumericalSgps}  package (\cite{numericalsgps}): \url{https://github.com/gap-packages/numericalsgps}. 
\end{algorithm}

Notice that algorithm computes also the set of pseudo-Frobenius numbers of every almost symmetric numerical semigroup of Frobenius number $F$ and type up to a fixed $t \geq 1$. Indeed, the key fact of Theorem \ref{16} is that controls the set of pseudo-Frobenius numbers.

\begin{remark}\label{Remark final}
The semigroup algebra $\mathbb{Q}[S] := \bigoplus_{s \in S} \mathbb{Q}\, \chi^s,$ a $S-$graded resolution has the form $$0 \to \bigoplus_{j=1}^{l_{k-1}} A^{r_{k-1\, j}}(-b_{k-1\, j}) \longrightarrow \ldots \longrightarrow \bigoplus_{j=1}^{l_1} A^{r_{1\, j}}(-b_{1\, j}) \longrightarrow A \stackrel{\varphi_0}{\longrightarrow} \mathbb{Q}[S] \to 0,$$ where integer $\beta_i := \sum_{j=1}^{l_j} r_{ij}$ is the rank of the $i-$th syzygy module of $\mathbb{Q}[S]$ and $b_{ij} \neq b_{ij'},\ j \neq j'.$ 

It is well known that $PF(S) = \big\{ b - \sum_{i=1}^k a_i \mid b \in \{b_{k-1\, 1}, \ldots,  b_{k-1\, l_{k-1}} \} \big\}$ (see \cite{MOT}). 

On the other hand, in \cite{OjVi} an algorithm for the computation of the $S-$gra\-ded minimal free resolution of $\mathbb{Q}[S]$ is given. One of the main contribution in that paper is that the resolution can be computed from left to right, that is, starting from the pseudo-Frobenius number of $S$. 

Therefore, our algorithms produce families of numerical semigroups for which $t = \beta_{k-1}$ is a priori known, and furthermore, for which the $S-$gra\-ded minimal free resolution of $\mathbb{Q}[S]$ can be computed starting from left to right.
\end{remark}

\section{Calculation time comparisons}\label{sect time}

In this section, a table with some computational results is presented. We have obtained it running GAP 4.8.6 (\cite{GAP}) in a Intel(R) Core(TM) i5-2450M CPU \@ 2.50GHz. The used implementations of our algorithms are the given above. The function
\begin{center}
 \texttt{AlmostSymmetricNumericalSemigroupsWithFrobeniusNumber} 
\end{center}
is the one included in the version 1.1.8 of the \texttt{NumericalSgps} package (\cite{numericalsgps}). 

For simplicity, we write 
\begin{enumerate}[ (A)]
\item for \texttt{AlmostSymmetricNumericalSemigroupsWithFrobeniusNumber}, 
\item for \texttt{assemigroupswithfrobeniusnumber}, 
\item for \texttt{assemigroupswithfrobeniusnumberuptotype}
\end{enumerate}
The computation time below is expressed in seconds.

\medskip
\begin{center}
\begin{tabular}{||c|c|c|c|c|c|c|c|c|c|c|c|c|c|c|c|c|c|c|c|}
\hline 
Frobenius(S) & 13 & 14 & 15 &  20 & 25 & 30 & 40 \\
\hline\hline 
(A)  & 0.004 & 0.004 & 0.012 & 0.032 & 0.196 & 0.764 & 39.292 \\
\hline 
(B)  & 0 & 0.008 & 0.008 & 0.04 & 0.172 & 0.764 & 38.924 \\
\hline 
(C)  & 0 & 0 & 0.004 & 0.008 & 0.028 & 0.076 & 0.692  \\
\hline 
\end{tabular}
\end{center}

\medskip
It is important to mention that the algorithm given in Section \ref{S3} is better than (C) if one just wants to compute the set $\mathscr{A}(F,t)$ for small $t$. For example, the function \texttt{assemigroupswithfrobeniusnumberandtype} took $0.884$ seconds to compute $\mathscr{A}(50,4)$, whereas a truncated version of (C) took $6.632$ seconds. The reason is clear, the procedure (C) needs to compute first $\mathscr{A}(50,t)$ for every $t$ from $F$ to $4$.

\medskip
The referee proposed us the following algorithm (for simplicity we only show the case $F$ odd): 
\begin{verbatim}
assemigroupswithfrobeniusnumberuptotype2 := function(F,t)
  local F2,t2s,listaAS,s2,L,S; 
  F2:=[1..Int(F/2)];
  t2s:=Combinations(F2,Int(t/2));
  listaAS:=[];;
  for s2 in t2s do 
    L:=Union(s2,F-s2);
    Append(L,[F]);
    S:=NumericalSemigroupsWithPseudoFrobeniusNumbers(L);
    Append(listaAS,S);
  od;
return(listaAS);
end;
\end{verbatim}
This algorithm effectively computes all AS-semigroups with a fixed Frobenius number and a given type. However, the large number of cases that must be verified makes the computation with very slow in comparission to the others. For instance, this algorithm computes the almost symmetric numerical semigroups with Frobenius number equal to 25 in arround 3.5 seconds. Perhaps an optimal choice of \texttt{t2s} and/or a particularization of the algorithm in \cite{DGSRP} to almost symmetric numerical semigroups will produce a faster algorithm.

\begin{remark}\label{rem_fin}
We finish this paper by noting that the computational evidence supports that, for $F$ large enough, the sequence of the cardinalities of the sets of almost symmetric of Frobenius number $F$ and decreasing type are
\[1, 1, 2, 4, 7, 12, 23, 39, 67, 118, 204, 343, 592, 1001,1693, \ldots\]
This happens, for instance, when $F=55$. %In this case, $\# \mathscr{A}(55,25)=2856$. 
This leads us to formulate the following conjecture:
\begin{enumerate}
\item[$\bullet$] there exists $N$ such that \[\# \mathscr{A}(F,t) = n_{(F-t)/2},\ \text{for every}\ F\ \text{and}\ t \geq N \] i.e. there exists $N$ such that $\# \mathscr{A}(F,t)$ is equal to the number of numerical semigroups with genus $(F-t)/2$, for every $F$ and every $t \geq N$.
\end{enumerate}
Therefore, we could use the $\# \mathscr{A}(F,t)$ to give light to the Bras-Amor\'os' conjecture on the number of numerical semigroups with a given genus (see \cite{Bras}). 

Observe that if our conjecture is true, it will also be an interesting problem to give an upper bound for the largest $N$ in terms of $F$.
\end{remark}

\medskip
\noindent\textbf{Acknowledgement.}
We thank the anonymous referees for their detailed suggestions and comments, which have greatly improved this article. We also thank Pedro A. Garc\'{\i}a-S\'anchez for his invaluable help to refine and improve the implementation of our algorithms.

%%%%%%%%%%%%%%%%%%%%%%%%%%%%%%%%%%%%%%%%%%%%%%%%%%%%%%%%%%%
%%%%%%%%%%%%%%%%%%%%%%%%%%%%%%%%%%%%%%%%%%%%%%%%%%%%%%%%%%%

\end{document}